\documentclass[11pt,a4paper]{amsart}
\usepackage{hyperref}
\hypersetup{
	colorlinks,
	linkcolor=black,
	citecolor=black,
	urlcolor=black,
	breaklinks=true
}
\usepackage[utf8]{inputenc}
\usepackage[T1]{fontenc}
\usepackage[english]{babel}
\usepackage{
	amssymb,
	mathtools,
	todonotes
}
\usepackage{geometry}
\newtheoremstyle{lemma}{.5\baselineskip\@plus.2\baselineskip\@minus.2\baselineskip}{.5\baselineskip\@plus.2\baselineskip\@minus.2\baselineskip}
	{\itshape}
	{}
	{\bfseries}
	{.}
	{\newline}
	{\thmname{#1}\thmnumber{ #2}\thmnote{ (#3)}}
\theoremstyle{lemma}
	\newtheorem{theorem}{Theorem}
	\newtheorem{lemma}[theorem]{Lemma}
	\newtheorem{proposition}[theorem]{Proposition}
	\newtheorem{corollary}[theorem]{Corollary}

\newtheoremstyle{definition}{.5\baselineskip\@plus.2\baselineskip\@minus.2\baselineskip}{.5\baselineskip\@plus.2\baselineskip\@minus.2\baselineskip}
	{}
	{}
	{\bfseries}
	{.}
	{\newline}
	{\thmname{#1}\thmnumber{ #2}\thmnote{ (#3)}}
\theoremstyle{definition}

\newcommand{\M}{\ensuremath{\mathcal{M}}}

\newcommand{\Z}{\ensuremath{\mathbb{Z}}}

\newcommand{\R}{\ensuremath{\mathbb{R}}}

\newcommand{\sphere}{\ensuremath{\mathbb{S}}}
\newcommand{\HM}{\ensuremath{\mathcal{H}}}

\newcommand{\dd}{\ensuremath{\mathrm{d}}}

\newcommand{\abs}[1]{\left| #1 \right|}

\renewcommand{\phi}{\varphi}
\renewcommand{\epsilon}{\varepsilon}

\hypersetup{
	pdftitle={A Sharp Lower Bound for Integral Menger Curvature},
	pdfsubject={},
	pdfauthor={Alexander Dohmen, Sebastian Scholtes},
	pdfkeywords={integral Menger curvature, knot energy, singular integral, Menger curvature, round circle},
	pdfcreator={},
	pdfproducer={}
}

\makeindex
\begin{document}

\title{A Sharp Lower Bound for Integral Menger Curvature}
\author{Alexander Dohmen}
\address{Institut für Mathematik, RWTH Aachen University, Im Süsterfeld 2, Aachen 52072, Germany}
\email{\href{mailto:dohmen@instmath.rwth-aachen.de}{dohmen@instmath.rwth-aachen.de}}
\urladdr{\href{https://www.instmath.rwth-aachen.de/~dohmen/home/}{https://www.instmath.rwth-aachen.de/~dohmen/home/}}
\author{Sebastian Scholtes}
\address{Liberal Arts and Sciences, Technische Hochschule Augsburg, An der Hochschule 1, Augsburg 86161, Germany}
\email{\href{mailto:sebastian.scholtes@tha.de}{sebastian.scholtes@tha.de}}
\urladdr{\href{https://www.tha.de/homes/sescho/}{https://www.tha.de/homes/sescho/}}
\date{\today}
\keywords{integral Menger curvature, knot energy, Menger curvature, singular integral, knots, round circle}
\subjclass[2020]{49Q10; 53A04, 42B20, 57K10}
\begin{abstract}
	We prove that round circles $C_L$ are the unique minimisers
	of integral Menger curvature among embedded closed
	$C^1$ curves $\gamma$ of prescribed length $L$ for every $p\geq 2$, i.e.
	\begin{align*}
		\M_p(C_L) \leq \M_p(\gamma)
	\end{align*}
	with equality if and only if $\gamma$ parametrises
	a round circle.
	The proof combines a three-point symmetrisation,
	an analysis of diagonal truncations, and the
	periodic Wirtinger inequality.
	For $p>3$, known self-avoidance and regularity
	results extend the minimisation result to closed
	rectifiable curves without assuming embeddedness
	or smoothness, provided the length of the curve
	image equals $L$.
	
	In particular, $\M_p$ detects the unknot and is basic for $p>3$, thereby answering a question posed repeatedly in the literature.
\end{abstract}
\maketitle

\section{Introduction}

For three pairwise distinct points $x, y, z\in\R^d$,
let $R(x, y, z)$ denote the radius of their
circumcircle, with $R(x, y, z)=\infty$ if the points
are collinear. Their \emph{Menger curvature} is
defined by
\begin{align*}
    c(x, y, z)\vcentcolon=\frac{1}{R(x, y, z)}.
\end{align*}
Menger curvature is a classical three-point
quantity with applications in harmonic analysis, geometric measure
theory and geometric knot
theory.

In harmonic analysis, integrals of squared Menger curvature
are closely connected to Melnikov's symmetrisation
of the Cauchy kernel and the study of analytic
capacity
\cite{Melnikov1995,MelnikovVerdera1995,
MattilaMelnikovVerdera1996}.
These ideas played an important role in
Tolsa's solution of the Painlev\'e problem
\cite{Tolsa2003}.
A precise identity relating the curvature of a measure with linear growth
and the squared $L^2$ norm of its Cauchy transform is due to Melnikov and
Verdera \cite{MelnikovVerdera1995}. Tolsa and Verdera \cite{TolsaVerdera2006}
used Menger curvature to study whether the Cauchy transform of a non-trivial
finite measure can vanish on its support.

In geometric measure theory, integral Menger
curvature has been used to study rectifiability
and regularity of sets.
L\'eger \cite{Leger1999} proved that finite
quadratic integral Menger curvature implies
rectifiability of one-dimensional sets in
Euclidean space, a result extended to general
metric spaces by Hahlomaa \cite{Hahlomaa2008}.
Lin and Mattila \cite{LinMattila2001} studied
the regularity of fractal sets using integrals
of powers of Menger curvature.
Schul \cite{Schul2007} investigated the
connection between averaged Menger curvature,
quantitative flatness, and the analyst's
travelling salesman problem for Ahlfors-regular
curves in metric spaces. 
More recently, Azzam and Tolsa
\cite{AzzamTolsa2015} investigated the
relationship between Jones' square functions
and quadratic Menger curvature of measures
with linear growth.
Graczyk, Jones, and Mihalache
\cite{GraczykJonesMihalache2022} studied
measures with bounded Menger curvature
and their connections to Hausdorff dimension
and analytic capacity.
F\"assler and Violo \cite{FasslerViolo2025}
obtained further results on quantitative
rectifiability in metric spaces, including
a criterion involving local integrals of
squared Menger curvature.
At the interface with geometric function theory, Bishop \cite{Bishop2025} characterised Weil--Petersson curves in terms of several geometric quantities, including Jones' $\beta$-numbers and integrals involving squared Menger curvature.

In geometric knot theory, the global radius of curvature introduced by Gonzalez and Maddocks
\cite{GonzalezMaddocks1999} led to a family of nonlocal curvature energies that control both the geometry and the topology
of curves; see \cite{StrzeleckiSzumanskaVdM2010,
StrzeleckiSzumanskaVdM2013,StrzeleckiVdM2013,StrzeleckiVdM2018}.

Let $L>0$, write $\sphere_L=\R/L\Z$, and let
$\gamma:\sphere_L\to\R^d$ be a closed rectifiable
curve parametrised by arc length. For $p>0$, its
\emph{integral Menger curvature} is
\begin{align*}
    \M_p(\gamma)
    \vcentcolon=
    \iiint_{\sphere_L^3}
    c\bigl(\gamma(s),\gamma(t),\gamma(u)\bigr)^p
    \,\dd s\,\dd t\,\dd u,
\end{align*}
where the value $+\infty$ is allowed and the
integrand may be set to zero whenever two of
the three points coincide; this convention
does not affect the integral. Under a dilation by $\lambda>0$,
the energy scales according to
\begin{align*}
    \M_p(\lambda\gamma)
    =\lambda^{3-p}\M_p(\gamma).
\end{align*}
Thus $p=3$ is the scale-invariant exponent.
For a round circle $C_L$ of length $L$, every
triple of distinct points has circumradius
$\frac{L}{2\pi}$, and therefore
\begin{align}\label{circlevalue}
    \M_p(C_L)=(2\pi)^pL^{3-p}.
\end{align}

In the following, we focus on results concerning the classical integral Menger curvature $\M_p$, rather than its various generalizations. For broader discussions of related geometric curvature energies, we refer to \cite{StrzeleckiSzumanskaVdM2013,StrzeleckiVdM2013,StrzeleckiVdM2018}.

Integral Menger curvature has been studied
extensively in the supercritical range $p>3$.
Strzelecki, Szuma\'nska, and von der Mosel
\cite{StrzeleckiSzumanskaVdM2010}
established self-avoidance
and regularity results. 
They showed that finite $\M_p$-energy excludes
self-intersections for $p\geq 3$ (see \cite{StrzeleckiVdM2013} for $p = 3$), provided multiple coverings are excluded. 
For $p > 3$, they additionally established $C^{1, 1-3/p}$ regularity of arc length parametrisations.
Blatt \cite{Blatt2013} subsequently
characterised finite $\M_p$-energy among
embedded $C^1$ curves by membership in the
Sobolev--Slobodeckij space $W^{2-2/p, p}$.

The existence of energy minimisers in prescribed
knot classes was established in
\cite{StrzeleckiSzumanskaVdM2010,StrzeleckiSzumanskaVdM2013,StrzeleckiVdM2013}.
They also showed that, for suﬃciently large $p$, every global minimiser is unknotted \cite[Theorem 3.9]{StrzeleckiSzumanskaVdM2013}.
Hermes \cite{Hermes2012} showed that the round circle is a stationary point.
Blatt and Reiter subsequently established $C^1$-differentiability on suitable spaces of embedded curves for $p>3$ \cite[Theorem~3]{BlattReiter2015}. Their regularity result for stationary points, however, does not apply to the classical integral Menger curvature $\M_p$.
Knappmann \cite{Knappmann2020} proved local minimality of the round circle for $p>3$ by analysing the second variation.
Gilsbach \cite{Gilsbach2018} proved the existence of distinct critical points in torus knot classes and investigated their behaviour as $p\to\infty$ in relation to ropelength.
However, these results did not establish
global minimality. In fact, whether the round
circle is the unique absolute minimiser of
$\M_p$ among closed curves of prescribed length has repeatedly been posed as an open problem, even
for planar curves; see \cite[Section~1.1]{StrzeleckiSzumanskaVdM2013}, \cite[Problem~5.1]{StrzeleckiVdM2013}, \cite[p.~1353]{OWR2013} and \cite[Open Questions~2.2.13]{StrzeleckiVdM2018}.

Numerical and discrete approximations of integral
Menger curvature have also been investigated.
Hermes \cite{Hermes2012} developed a numerical
gradient flow based on trigonometric polynomial
approximations.
Building on this approach, Gilsbach
\cite{Gilsbach2018} investigated symmetric
critical points in torus knot classes numerically,
in particular their stability and their behaviour
as $p\to\infty$.
Scholtes \cite{Scholtes2014Thesis,Scholtes2018Discrete}
studied polygonal discretisations and established
corresponding $\Gamma$-convergence results.

The present paper establishes a sharp global estimate throughout the embedded $C^1$ class,
starting at the quadratic exponent.

\begin{theorem}[Sharp lower bound for integral Menger curvature]\label{mainTheorem}
    Let $d \geq 2$ and let $\gamma\in C^{1}(\sphere_{L}, \R^{d})$ be parametrised by arc length and embedded.
    Then for $p \geq 2$,
    \begin{align}\label{mainInequality}
        (2\pi)^pL^{3-p} \leq \M_p(\gamma).
    \end{align}
    Equality holds iff $\gamma$ parametrises a round circle.
\end{theorem}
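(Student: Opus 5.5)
The abstract names the ingredients: a three-point symmetrisation, an analysis of diagonal truncations, and the periodic Wirtinger inequality. The plan is to reduce the case $p\geq 2$ to $p=2$ by Jensen's inequality, to rewrite $\M_2$ through a symmetrisation as an integral of squared tangent-type quantities, and to compare that integral with its value on the circle via Wirtinger.

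\textbf{Step 1: reduction to $p=2$.} By Jensen's inequality on the probability space $\sphere_L^3$ with normalised measure $L^{-3}\,\dd s\,\dd t\,\dd u$, and since $x\mapsto x^{p/2}$ is convex for $p\geq 2$,
\begin{align*}
    \M_p(\gamma)\geq L^{3}\Bigl(L^{-3}\M_2(\gamma)\Bigr)^{p/2}.
\end{align*}
It therefore suffices to prove $\M_2(\gamma)\geq (2\pi)^2L$ with equality only for circles. Equality in Jensen forces $c$ to be almost everywhere constant, and that constant then equals $2\pi/L$. So the equality case of the full theorem also reduces to the $p=2$ equality case, together with the fact that constant Menger curvature characterises circles.

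\textbf{Step 2: three-point symmetrisation.} Write $c(x,y,z)=2\sin\angle(y-x,z-x)/\abs{y-z}$. The plan is to use a Melnikov-type identity: $c^2$ is expressed as a sum over the three vertices of terms that pair unit chord directions $e_{xy}=(y-x)/\abs{y-x}$. In the planar case this is Melnikov's identity $c^2=\sum_{\sigma}\frac{1}{(x_{\sigma 1}-x_{\sigma 3})\overline{(x_{\sigma 2}-x_{\sigma 3})}}$. In $\R^d$ one uses the real analogue
\begin{align*}
    c(x,y,z)^2=\frac{4\bigl(1-\langle e_{xy},e_{xz}\rangle^2\bigr)}{\abs{y-z}^2},
\end{align*}
symmetrised over the three vertices. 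This symmetrisation should lower-bound $\M_2$ by an expression of the form
\begin{align*}
    \iint \frac{\abs{\gamma'(s)-e(s,t)}^2}{\abs{s-t}^2}\,\dd s\,\dd t.
\end{align*}
Here $e(s,t)$ is the chord direction, and the integral is a $W^{1/2,2}$-type seminorm of the tangent. Because the kernel is singular, the integrals must be taken over diagonal truncations $\{\abs{s-t},\abs{t-u},\abs{s-u}>\epsilon\}$. The boundary terms produced by the symmetrisation and integration are then controlled as $\epsilon\to0$, using $\gamma\in C^1$ and embeddedness. Embeddedness guarantees that the chord-length ratios $\abs{\gamma(s)-\gamma(t)}/\abs{s-t}$ stay bounded below, so all truncated integrals are finite.

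\textbf{Step 3: Wirtinger.} The result of Step 2 is a lower bound of the form $\sum_k w_k\abs{\hat\gamma'_k}^2$ in Fourier coefficients of $\gamma'$, with nondecreasing weights $w_k$. The constraints are $\hat\gamma'_0=0$ (closedness) and $\sum_k\abs{\hat\gamma'_k}^2=1/L$ (arc length, suitably normalised). The periodic Wirtinger inequality then says this sum is minimised when only the modes $k=\pm1$ are present, giving exactly $(2\pi)^2L$. Equality forces $\gamma'$ to lie in the span of the first modes, so $\gamma$ is a planar ellipse-like trigonometric curve. Unit speed then forces it to be a round circle.

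\textbf{Main obstacle.} The hardest step is Step 2. One needs the nonlinear chord-angle terms to dominate a quadratic form in $\gamma'$ whose minimiser is the circle. One also needs the truncation error terms to have the right sign or to vanish in the limit; an identity alone does not suffice. My first attempt would use the spherical-angle inequality $1-\langle a,b\rangle^2\geq$ a suitable multiple of $\abs{a-b}^2$ for unit vectors close to each other. I would then integrate out one variable explicitly, using $\gamma(t)-\gamma(s)=\int_s^t\gamma'$, to obtain the Fourier multiplier and check its monotonicity.
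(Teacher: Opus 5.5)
\textbf{Gap in Step 2.} Your Step 1 is fine and is the paper's own reduction via monotonicity of normalised $L^p$ norms. Equality in that chain already forces $\M_2(\gamma)=4\pi^2L$, so you do not need the extra fact that constant Menger curvature characterises circles. The genuine gap is Step 2, on which Step 3 rests, and the inequality you aim for there is false.

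First, $c^2=4(1-\langle e_{xy},e_{xz}\rangle^2)/\abs{y-z}^2$ is only the law of sines. It does not split $c^2$ into products of a single two-point kernel, so integrating it never produces the square of a one-variable field.

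Second, and more seriously, $\M_2$ does not control any $W^{1/2,2}$-type seminorm of the tangent. We have $c(x,y,z)\le 2/\mathrm{diam}\{x,y,z\}$, and $c=0$ on collinear triples. So triples of diameter about $r$ near a corner contribute $O(r)$ per dyadic scale, and $\M_2$ of a polygon is finite. It stays uniformly bounded for the embedded $C^1$ curves obtained by rounding a corner with an arc of radius $\rho\to0$. For these curves, take $s$ before and $t$ after the corner at distances $\rho\ll a\le b$. Then $\abs{\gamma'(s)-e(s,t)}$ is bounded below by a constant depending only on the turning angle, and $\abs{s-t}\le 3b$. Hence $\iint\abs{\gamma'(s)-e(s,t)}^2\abs{s-t}^{-2}\,\dd s\,\dd t\gtrsim\log(1/\rho)\to\infty$; a one-dimensional $H^{1/2}$ seminorm cannot tolerate a jump. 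So no bound ``$\M_2\gtrsim$ such a seminorm minus controlled terms'' can hold. In any case your expression is nonlinear in $\gamma'$, so there is no Fourier multiplier $w_k$ for Step 3 to minimise.

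\textbf{What the paper does instead.} The missing idea is the genuine real Melnikov identity $\frac12c^2=\sum_{\text{vertices}}K(s,t)\cdot K(s,u)$, with the antisymmetric kernel $K(s,t)=(\gamma(t)-\gamma(s))/\abs{\gamma(t)-\gamma(s)}^2$.
\begin{itemize}
\item Integrating over $\{d_L(s,t)>\epsilon,\ d_L(s,u)>\epsilon\}$ gives $N_\epsilon=\int\abs{F_\epsilon}^2$, where $F_\epsilon(s)=\int_{d_L(s,t)>\epsilon}K(s,t)\,\dd t$.
\item The fully symmetric part equals $\frac16\int_{D_\epsilon}c^2\to\frac16\M_2(\gamma)$.
\item The leftover strip $d_L(t,u)\le\epsilon$ does not vanish, and a sign would not suffice. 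Via $K(s,s+h)=T(s)/h+o(1/\abs{h})$ it converges to the universal value $\frac{\pi^2}{3}L$, whose exact value enters the sharp constant.
\item Wirtinger then enters by duality, not through a quadratic form in $\gamma'$. Since $K(s,t)\cdot(\gamma(s)-\gamma(t))=-1$, antisymmetry gives exactly $\int F_\epsilon\cdot\gamma=-\frac{L(L-2\epsilon)}{2}$.
\item Cauchy--Schwarz together with $\int\abs{\gamma}^2\le L^3/(4\pi^2)$ for mean-zero $\gamma$ yields $N_\epsilon\ge\pi^2(L-2\epsilon)^2/L$.
\item Letting $\epsilon\downarrow0$ gives $\pi^2L\le\frac16\M_2(\gamma)+\frac{\pi^2}{3}L$, that is, $\M_2(\gamma)\ge4\pi^2L$.
\end{itemize}
Equality forces equality in Wirtinger, and from there your unit-speed argument identifies the round circle.
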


In the supercritical range $p>3$, known
self-avoidance and regularity results allow
the conclusion to be extended to rectifiable
curves without imposing differentiability
or embeddedness a priori.
Since integral Menger curvature alone does
not exclude multiple coverings, we require
the length of the curve image to agree with
the total traversed length.
Combining the regularity and topological
results of
\cite{StrzeleckiSzumanskaVdM2010}
with Theorem~\ref{mainTheorem} gives the
following consequence.

\begin{corollary}[The round circle is the unique absolute minimiser of $\M_p$ for $p>3$]\label{basicCorollary}
	Let $d\geq 2$ and let $\gamma:\sphere_L\to\R^d$ be a closed rectifiable curve parametrised by arc length, such that
	\begin{align*}
		\HM^1\bigl(\gamma(\sphere_L)\bigr)=L.
	\end{align*}
	Then for all $p>3$ we have
	\begin{align*}
		\M_p(C_L) \leq \M_p(\gamma),
	\end{align*}
	with equality iff $\gamma$ parametrises a round circle.
	In particular, $\M_p$ is unknot-detecting and basic.
\end{corollary}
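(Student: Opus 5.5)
The plan is to reduce Corollary~\ref{basicCorollary} to Theorem~\ref{mainTheorem} by checking that any competitor with $\M_p(\gamma)<\infty$ is admissible there, namely embedded and $C^1$.

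If $\M_p(\gamma)=\infty$ there is nothing to prove, since $\M_p(C_L)=(2\pi)^pL^{3-p}$ is finite by \eqref{circlevalue}. So assume $\M_p(\gamma)<\infty$ with $p>3$.

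\textbf{Step 1: rule out multiple coverings and self-intersections.} An arc length parametrisation on $\sphere_L$ with $\HM^1(\gamma(\sphere_L))=L$ has total traversed length equal to the measure of its image. Hence, by the area formula, the multiplicity function $\#\gamma^{-1}(x)$ equals $1$ for $\HM^1$-a.e.\ $x$ in the image. This is exactly the hypothesis that excludes multiple coverings in the self-avoidance theorem of \cite{StrzeleckiSzumanskaVdM2010}. That theorem then gives injectivity of $\gamma$ on $\sphere_L$, so $\gamma$ is a homeomorphism onto a simple closed curve.

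The main care point lies here. We must match our image-length condition with the precise form of their assumption, which is phrased as ``$\gamma$ is a local homeomorphism / the image has length $L$.'' I expect the area-formula reformulation to suffice.

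\textbf{Step 2: regularity.} Still from \cite{StrzeleckiSzumanskaVdM2010}, finite $\M_p$ with $p>3$ for such a simple curve yields $\gamma\in C^{1,1-3/p}$. Since $|\gamma'|=1$ a.e.\ and $\gamma'$ is continuous, we get $|\gamma'|\equiv1$. So $\gamma$ is an embedded $C^1$ arc length parametrisation, and it is now admissible in Theorem~\ref{mainTheorem}.

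\textbf{Step 3: apply the theorem.} Since $p>3\geq2$, Theorem~\ref{mainTheorem} gives $(2\pi)^pL^{3-p}\le\M_p(\gamma)$, with equality only for round circles. Equality cannot occur in the infinite-energy case either, so the characterisation of equality holds in general.

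\textbf{Step 4: the topological consequences.} Suppose some knotted curve $\gamma$ with image length $L$ had $\M_p(\gamma)\le\M_p(C_L)$. Steps 1--3 would force $\gamma$ to be a round circle, which is unknotted, a contradiction. Hence:
\begin{itemize}
\item[(i)] any curve with energy at most the circle value is an unknot, so $\M_p$ is unknot-detecting;
\item[(ii)] the infimum over all curves is attained exactly at the circle, which lies in the trivial knot class, so $\M_p$ is basic.
\end{itemize}
Scale invariance is not needed, since length is fixed.
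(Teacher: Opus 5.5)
Your Steps 1--3 follow the paper's argument. The hypothesis $\HM^1(\gamma(\sphere_L))=L$ excludes multiple coverings, and your area-formula remark makes this explicit. The results of \cite{StrzeleckiSzumanskaVdM2010} then give embeddedness and $C^{1,1-3/p}$ regularity. You should add, as the paper does, that these results are stated in $\R^3$ but their proofs carry over to $\R^d$. Theorem~\ref{mainTheorem} then yields the inequality and the equality case, and your argument for ``basic'' is fine.

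The gap is Step~4(i). Unknot-detection is a \emph{gap} property, in the spirit of F\'ary--Milnor for total curvature, and the paper's proof uses it in that sense. One needs $\inf\{\M_p(\gamma):\gamma\text{ knotted}\}>\M_p(C_L)$, i.e.\ a threshold strictly above the minimal energy below which only unknots occur.

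What you show is that every curve with $\M_p(\gamma)\le\M_p(C_L)$ is unknotted. Once uniqueness is known this is vacuous, because the only such curves are round circles. It gives $\M_p(\gamma)>\M_p(C_L)$ for each knotted $\gamma$ separately. It does not exclude a sequence of knotted curves whose energies decrease to $\M_p(C_L)$, and Theorem~\ref{mainTheorem} alone cannot rule this out, so you need a compactness input.

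The paper uses that $\M_p$ is minimisable and strong for $p>3$ \cite[Theorem~3.2]{StrzeleckiVdM2013}:
\begin{itemize}
\item Only finitely many knot types occur below, say, the level $\M_p(C_L)+1$.
\item Each of these types has a minimiser.
\item Each such minimiser in a nontrivial class is knotted, hence not a round circle, so by the uniqueness part its energy strictly exceeds $\M_p(C_L)$.
\item The minimum of finitely many such values then also strictly exceeds $\M_p(C_L)$.
\end{itemize}
An equivalent route is a direct compactness argument. Take knotted curves with energies tending to $\M_p(C_L)$. The uniform $C^{1,1-3/p}$ and bi-Lipschitz control under an energy bound gives a $C^1$-convergent subsequence. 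Its limit has the same knot type, and by Fatou's lemma its energy is at most $\M_p(C_L)$. So the limit is a round circle, which is a contradiction.
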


At the scale-invariant exponent $p = 3$, finite integral Menger curvature already excludes self-intersections for curves without multiple coverings \cite[Remark after Lemma 2.4]{StrzeleckiVdM2013}. However, the available
regularity results do not directly provide the $C^1$ regularity required by our truncation argument.
For results on weak approximate tangents at the critical exponent, see \cite[Theorem 1.1]{Scholtes2012Tangency}.

We now describe the argument leading to
Theorem~\ref{mainTheorem}.
The starting point is the quadratic energy
$\M_2$ and an elementary three-point
symmetrisation. For distinct parameters
$s,t\in\sphere_L$, introduce the
vector-valued kernel
\begin{align*}
    K(s,t)\vcentcolon=
    \frac{\gamma(t)-\gamma(s)}{\abs{\gamma(t)-\gamma(s)}^2}.
\end{align*}
The elementary three-point identity
\begin{align}\label{symmetrisationIntro}
    \frac{1}{2}c\bigl(\gamma(s),\gamma(t),\gamma(u)\bigr)^2
    = K(s,t)\cdot K(s,u) + K(t,s)\cdot K(t,u) + K(u,s)\cdot K(u,t)
\end{align}
is the real-vector analogue of the
symmetrisation underlying Melnikov's
curvature method. It is valid in every
Euclidean dimension.

The individual terms on the right-hand
side are singular along the parameter
diagonals. We therefore introduce the
symmetrically truncated field
\begin{align*}
    F_\epsilon(s) \vcentcolon= \int_{\{t:\epsilon < d_L(s, t)\}} K(s, t)\,\dd t,\qquad\text{and}\qquad
    N_\epsilon \vcentcolon= \int_{\sphere_L}\abs{F_\epsilon(s)}^2\,\dd s,
\end{align*}
where $d_L$ denotes the intrinsic distance on the parameter circle
\begin{align*}
	d_{L}(s, t)\vcentcolon=\min_{k\in\Z}\abs{s-t+kL}.
\end{align*}

The central step is to compare the
truncation defining $N_\epsilon$ with the
fully symmetric truncation of $\M_2$.
The symmetrised part contributes one sixth
of the truncated curvature integral.
The remaining region is a strip along
the third parameter diagonal.
A local expansion of the kernel, using only
the uniform continuity of the tangent,
shows that this strip has the universal
limit $\frac{\pi^2L}{3}$. Consequently,
\begin{align*}
    \lim_{\epsilon\downarrow 0}N_\epsilon
    = \frac{1}{6}\M_2(\gamma)+\frac{\pi^2L}{3},
\end{align*}
with the value $+\infty$ allowed.
In the planar case, the corresponding identity
for the Cauchy transform, including the same
universal diagonal correction, was established
by Tolsa and Verdera
\cite[Theorem~2]{TolsaVerdera2006}.

On the other hand, antisymmetry of $K$
gives the exact pairing identity
\begin{align*}
    \int_{\sphere_L}
    F_\epsilon(s)\cdot\gamma(s)\,\dd s
    =-\frac{L(L-2\epsilon)}2.
\end{align*}
After translating the curve to have zero
mean, the Cauchy--Schwarz inequality and
the periodic Wirtinger inequality imply
\begin{align*}
	\pi^2\frac{(L-2\epsilon)^2}{L} \leq N_\epsilon.
\end{align*}
Combining the two estimates yields
\begin{align*}
	4\pi^2L \leq \M_2(\gamma).
\end{align*}
The equality case forces equality in
Wirtinger's inequality and, together with
arc length parametrisation, characterises
the round circle. Finally, monotonicity
of normalised $L^p$ norms extends the
result to every $p>2$.

\section{Menger curvature and symmetrisation}\label{sectionSymmetrisation}

We begin with the elementary algebraic identity behind the argument. 

\begin{lemma}[Melnikov's symmetrization identity]\label{symmetrisationLemma}
	For any three pairwise distinct points $x, y, z\in\R^{d}$ the following identity holds
	\begin{align}\label{symmetrisationIdentity}
		\frac{1}{2}c(x,y,z)^{2}
		={}&\frac{(y-x)\cdot(z-x)}{\abs{y-x}^{2}\abs{z-x}^{2}}
		+\frac{(x-y)\cdot(z-y)}{\abs{x-y}^{2}\abs{z-y}^{2}}
		+\frac{(x-z)\cdot(y-z)}{\abs{x-z}^{2}\abs{y-z}^{2}}.
	\end{align}
\end{lemma}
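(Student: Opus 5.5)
Since both sides of \eqref{symmetrisationIdentity} are invariant under translations and rotations, and scale in the same way, the identity reduces to elementary triangle geometry. Set $a\vcentcolon=\abs{y-z}$, $b\vcentcolon=\abs{x-z}$, $c_0\vcentcolon=\abs{x-y}$, and let $\alpha,\beta,\gamma_0$ be the interior angles of the (possibly degenerate) triangle $xyz$ at $x$, $y$, $z$. The three points span an affine plane of dimension at most two, so every quantity lives in $\R^2$; the plan is to verify the identity directly in terms of side lengths, which needs no case distinction on $d$.

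\textbf{Step 1: rewrite the right-hand side.} The first summand equals $\cos\alpha/(bc_0)$, since $(y-x)\cdot(z-x)=bc_0\cos\alpha$. By the law of cosines, $2(y-x)\cdot(z-x)=b^2+c_0^2-a^2$. Hence the right-hand side equals
\begin{align*}
\frac{b^2+c_0^2-a^2}{2b^2c_0^2}+\frac{a^2+c_0^2-b^2}{2a^2c_0^2}+\frac{a^2+b^2-c_0^2}{2a^2b^2}.
\end{align*}
Over the common denominator $2a^2b^2c_0^2$ the numerator is
\begin{align*}
a^2(b^2+c_0^2-a^2)+b^2(a^2+c_0^2-b^2)+c_0^2(a^2+b^2-c_0^2)=2(a^2b^2+b^2c_0^2+c_0^2a^2)-(a^4+b^4+c_0^4).
\end{align*}
This is exactly $16\,\mathrm{Area}^2$ by Heron's formula in its symmetric form.

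\textbf{Step 2: the left-hand side.} For non-collinear points we have $R=abc_0/(4\,\mathrm{Area})$, so
\begin{align*}
\tfrac12 c^2=\frac{8\,\mathrm{Area}^2}{a^2b^2c_0^2}=\frac{16\,\mathrm{Area}^2}{2a^2b^2c_0^2},
\end{align*}
which matches Step 1. In the collinear case the Heron expression vanishes because $\mathrm{Area}=0$, consistent with $c=0$. Since the points are pairwise distinct, all denominators are nonzero.

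\textbf{Main obstacle.} The only real work is the polynomial bookkeeping in Step 1. To guard against sign errors I would expand it once and then confirm it on an equilateral triangle. With unit sides each summand is $\tfrac12$, so the sum is $\tfrac32$; on the other side $R=1/\sqrt3$, so $\tfrac12c^2=\tfrac32$.

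An alternative route avoids Heron's formula: write the summands as $\cos\alpha/(bc_0)$ etc., multiply through by $abc_0$, and use $a=2R\sin\alpha$. One then needs $\sum\sin\alpha\cos\alpha=2\sin\alpha\sin\beta\sin\gamma_0$, the standard identity $\sum\sin 2\alpha=4\prod\sin\alpha$ for angles summing to $\pi$. I would mention this route but carry out the side-length computation.
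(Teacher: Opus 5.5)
Your proposal is correct and follows essentially the same route as the paper: the law of cosines turns the right-hand side into $\bigl(2a^2b^2+2b^2c_0^2+2c_0^2a^2-a^4-b^4-c_0^4\bigr)/(2a^2b^2c_0^2)$, Heron's formula identifies the numerator with $16A^2$, and $c=4A/(abc_0)$ finishes the argument, with the collinear case covered by $A=0$. Your numerator expansion, the equilateral sanity check, and the alternative trigonometric route via $\sum\sin 2\alpha=4\prod\sin\alpha$ are all correct.
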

\begin{proof}
	Let
	\begin{align*}
		a=\abs{y-z},\qquad b=\abs{z-x},\qquad c=\abs{x-y}
	\end{align*}
	and let $A$ denote the area of the Euclidean triangle spanned by $x, y, z$. By the cosine law,
	\begin{align*}
		(y-x)\cdot(z-x)=\frac{b^{2}+c^{2}-a^{2}}{2},
	\end{align*}
	and analogous identities hold at the other two vertices. The right-hand side of \eqref{symmetrisationIdentity} therefore equals
	\begin{align*}
		\frac{2a^{2}b^{2}+2b^{2}c^{2}+2c^{2}a^{2}-a^{4}-b^{4}-c^{4}}{2a^{2}b^{2}c^{2}}.
	\end{align*}
	Heron's identity gives
	\begin{align*}
		16A^{2}=2a^{2}b^{2}+2b^{2}c^{2}+2c^{2}a^{2}-a^{4}-b^{4}-c^{4}.
	\end{align*}
	For three pairwise distinct points, we know from elementary geometry that
	\begin{align*}
		c(x,y,z)=\frac{4A}{abc}.
	\end{align*}
	This formula also covers the collinear case $A=0$ under the convention $c(x,y,z)=0$. Hence 	
	\begin{align*}
		\frac{1}{2}c(x,y,z)^{2} = \frac{16A^2}{2a^2b^2c^2} = \frac{2a^{2}b^{2}+2b^{2}c^{2}+2c^{2}a^{2}-a^{4}-b^{4}-c^{4}}{2a^{2}b^{2}c^{2}}.
	\end{align*}
\end{proof}

\section{Local form of the kernel}\label{sectionPV}

We establish a local expansion of the kernel $K$ near the parameter diagonal, which will be used to analyse the diagonal correction.
Here, we only need continuity of the tangent $T$. 
It is convenient to formulate the local expansion in terms of its modulus of continuity
\begin{align}\label{definitionOmega}
	\omega(r)
	\vcentcolon=
	\sup\{\abs{T(s)-T(t)}:d_{L}(s,t)\leq r\},
	\qquad 0\leq r\leq \frac{L}{2}.
\end{align}
Since $T$ is continuous on the compact parameter circle, $\omega$ is nondecreasing and $\omega(r)\to0$ as $r\downarrow0$.

\begin{lemma}[Local expansion of the kernel]\label{kernelExpansionLemma}
	Let $\gamma\in C^{1}(\sphere_{L},\R^{d})$ be parametrised by arc length and set $T=\gamma'$. There are $\delta_{0}\in(0,\frac{L}{8})$ and a universal constant $C>0$ such that for every $s\in\sphere_{L}$ and every $0<\abs{h}<\delta_{0}$,
	\begin{align}\label{kernelExpansion}
		K(s,s+h)=\frac{T(s)}{h}+E_{s}(h),
		\qquad
		\abs{E_{s}(h)}\leq C\frac{\omega(\abs{h})}{\abs{h}}.
	\end{align}
	Here and below additions in the parameter are understood modulo $L$.
\end{lemma}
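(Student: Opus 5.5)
We plan to write the chord as an integral of the tangent and compare it with the linear approximation $hT(s)$. By the fundamental theorem of calculus,
\begin{align*}
	\gamma(s+h)-\gamma(s)=\int_0^h T(s+\tau)\,\dd\tau = hT(s)+R_s(h),\qquad R_s(h)\vcentcolon=\int_0^h\bigl(T(s+\tau)-T(s)\bigr)\dd\tau.
\end{align*}
For $\abs{h}\leq L/2$ every $\tau$ between $0$ and $h$ satisfies $d_L(s,s+\tau)\leq\abs{h}$. Hence $\abs{R_s(h)}\leq \abs{h}\,\omega(\abs{h})$.

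Next we choose $\delta_0\in(0,\frac L8)$ so small that $\omega(\delta_0)\leq\frac12$. This is possible because $\omega(r)\to0$ and $\omega$ is nondecreasing. For $0<\abs h<\delta_0$ we then get
\begin{align*}
	\abs{\gamma(s+h)-\gamma(s)}\geq \abs h-\abs{R_s(h)}\geq \tfrac12\abs h,
\end{align*}
since $\abs{T(s)}=1$. This is the only place where we need the smallness of $h$. In particular, the kernel is well defined there and nonsingular away from $h=0$.

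Now write $v=\gamma(s+h)-\gamma(s)=hT(s)+R$ and $K=v/\abs v^2$. We compare $K$ with $hT(s)/h^2=T(s)/h$. The map $v\mapsto v/\abs v^2$ is the inversion in the unit sphere, and it satisfies the exact identity
\begin{align*}
	\Bigl|\frac{v}{\abs v^2}-\frac{w}{\abs w^2}\Bigr|=\frac{\abs{v-w}}{\abs v\,\abs w}.
\end{align*}
This identity follows by expanding the square: both sides squared equal $\frac{1}{\abs v^2}+\frac1{\abs w^2}-\frac{2v\cdot w}{\abs v^2\abs w^2}$. Applying it with $w=hT(s)$, where $\abs w=\abs h$, and using $\abs v\geq\frac12\abs h$, gives
\begin{align*}
	\abs{E_s(h)}=\Bigl|K(s,s+h)-\frac{T(s)}h\Bigr|\leq\frac{\abs h\,\omega(\abs h)}{\tfrac12\abs h\cdot\abs h}=2\,\frac{\omega(\abs h)}{\abs h}.
\end{align*}
So the universal constant is $C=2$.

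The main obstacle is securing the lower bound on the chord length uniformly in $s$, so that the denominator does not degenerate. The uniform modulus $\omega$ from \eqref{definitionOmega}, together with the choice $\omega(\delta_0)\le\frac12$, handles exactly this. The rest is the elementary inversion estimate above. We also check that the sign convention works for negative $h$: both the integral $\int_0^h$ and $T(s)/h$ change sign consistently, so the argument runs identically with $\abs h$ in place of $h$ in all bounds.
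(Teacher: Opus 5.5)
Your proof is correct and follows the paper's argument. It uses the same remainder bound $\abs{R_s(h)}\leq\abs{h}\,\omega(\abs{h})$, the same choice $\omega(\delta_0)\leq\frac12$, and the same chord bound $\abs{\gamma(s+h)-\gamma(s)}\geq\frac{\abs h}{2}$.

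The only difference is the final step. The paper splits $K(s,s+h)-\frac{T(s)}{h}$ into $\frac{r_s(h)}{\abs{\Delta}^2}$ and $T(s)\bigl(\frac{h}{\abs{\Delta}^2}-\frac1h\bigr)$, where $\Delta=\gamma(s+h)-\gamma(s)$, and bounds each term separately to get $C=14$. Your exact inversion identity $\bigl|\frac{v}{\abs v^2}-\frac{w}{\abs w^2}\bigr|=\frac{\abs{v-w}}{\abs v\,\abs w}$ handles both terms at once and gives the sharper constant $C=2$.
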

\begin{proof}
	By the fundamental theorem of calculus,
	\begin{align*}
		\gamma(s+h)-\gamma(s)=hT(s)+r_{s}(h),
	\end{align*}
	where $r_s$ is the remainder at $s$. 
	For $s\in\sphere_L$ and sufficiently small $h$, set
	\begin{align*}
		r_s(h) \vcentcolon= \gamma(s+h)-\gamma(s)-hT(s)
		= \int_0^h \bigl(T(s+\tau)-T(s)\bigr)\dd \tau.
	\end{align*}
	By \eqref{definitionOmega} we may estimate
	\begin{align}\label{remainderEstimate}
		\abs{r_{s}(h)} \leq \abs{h}\,\omega(\abs{h}).
	\end{align}
	Choose $\delta_{0}<\frac{L}{8}$ so small that $\omega(\delta_{0})\leq \frac{1}{2}$. 
	Writing $\Delta = \gamma(s+h)-\gamma(s) = hT(s)+r_{s}(h)$, we obtain
	\begin{align}\label{localChordBound}
		\frac{\abs{h}}{2}\leq \abs{hT(s)} -\abs{r_s(h)}\leq \abs{\Delta}
	\end{align}
	for $0<\abs{h}<\delta_{0}$. 
	Consequently,
	\begin{align*}
		K(s, s+h)-\frac{T(s)}h
		= \frac{r_{s}(h)}{\abs{\Delta}^{2}} + T(s)\left(\frac{h}{\abs{\Delta}^{2}}-\frac{1}{h}\right).
	\end{align*}
	By \eqref{remainderEstimate} the first term is bounded by $\frac{4\omega(\abs{h})}{\abs{h}}$. Moreover,
	\begin{align*}
		\abs{h^{2}-\abs{\Delta}^{2}}
		\leq 2\abs{h}\abs{r_{s}(h)}+\abs{r_{s}(h)}^{2}
		\leq \frac{5}{2}\abs{h}^{2}\omega(\abs{h}),
	\end{align*}
	where the last inequality follows from $\omega(\abs{h})\leq\omega(\delta_0)\leq\frac{1}{2}$.
	Together with \eqref{localChordBound} this gives
	\begin{align*}
		\left\lvert\frac{h}{\abs{\Delta}^{2}}-\frac1h\right\rvert
		\leq10\frac{\omega(\abs{h})}{\abs{h}}.
	\end{align*}
	Thus \eqref{kernelExpansion} holds, for example with $C=14$.
\end{proof}

\section{Symmetric truncation and the diagonal correction}\label{sectionTruncations}

We introduce a truncated kernel integral and
decompose its squared $L^2$-norm into a
symmetrised curvature term and a diagonal correction.

For this section we require $\gamma$ to be embedded.
Let
\begin{align*}
	d_{L}(s,t)\vcentcolon=\min_{k\in\Z}\abs{s-t+kL}
\end{align*}
be the intrinsic distance on $\sphere_{L}$. For $0< \epsilon < \frac{L}{2}$ define the truncated singular integral
\begin{align*}
	F_{\epsilon}(s)	\vcentcolon= \int_{\epsilon < d_{L}(s, t)}K(s, t)\,\dd t
\end{align*}
and set
\begin{align*}
	N_{\epsilon} \vcentcolon= \int_{\sphere_{L}}\abs{F_{\epsilon}(s)}^{2}\,\dd s.
\end{align*}
Since $\gamma$ is embedded and continuous,
the kernel $K$ is bounded on $\{(s,t):\epsilon\leq d_L(s, t)\}$.
Consequently, $F_\epsilon$ and $N_\epsilon$ are finite for every $0 < \epsilon < \frac{L}{2}$.
Expanding the square and using the bilinearity of the scalar product, we obtain
\begin{align*}
	N_\epsilon
	=
	\int_{\sphere_L}
		\int_{\epsilon < d_L(s, t)}
			\int_{\epsilon < d_L(s, u)}
				K(s, t)\cdot K(s, u)
			\,\dd u
		\,\dd t
	\,\dd s.
\end{align*}
The fully symmetric truncation is
\begin{align*}
	D_{\epsilon} \vcentcolon= \left\{(s, t, u)\in\sphere_{L}^{3}:\epsilon< d_{L}(s, t),\ \epsilon< d_{L}(s, u),\ \epsilon<d_{L}(t, u)\right\}.
\end{align*}
We split
\begin{align*}
	N_{\epsilon} = A_{\epsilon} + B_{\epsilon},
\end{align*}
where
\begin{align*}
	A_{\epsilon} \vcentcolon= \iiint_{D_{\epsilon}}K(s, t)\cdot K(s, u)\,\dd s\,\dd t\,\dd u,
\end{align*}
and
\begin{align}\label{BepsilonDefinition}
	B_{\epsilon} \vcentcolon= \iiint_{\substack{\epsilon < d_{L}(s, t),\ \epsilon< d_{L}(s, u)\\d_{L}(t, u)\leq\epsilon}}
	K(s, t)\cdot K(s, u)\,\dd s\,\dd t\,\dd u.
\end{align}
We will show
\begin{align*}
	A_\epsilon\xrightarrow[\epsilon\downarrow 0]{}\frac{1}{6}\M_{2}(\gamma)\qquad\text{and}\qquad
	B_\epsilon\xrightarrow[\epsilon\downarrow 0]{}\frac{\pi^{2}}3L,
\end{align*}
so that
\begin{align}\label{eq:limieNepsilon}
	N_\epsilon\xrightarrow[\epsilon\downarrow 0]{}\frac{1}{6}\M_{2}(\gamma) + \frac{\pi^{2}}3L.
\end{align}

\begin{lemma}[The symmetric part]\label{symmetricPartLemma}
	Let $\gamma:\sphere_L\to\R^d$ be an embedded, arc length parametrised continuous curve.
	Then, for $0<\epsilon<\frac{L}{2}$
	\begin{align}\label{AepsilonMenger}
		A_{\epsilon} = \frac{1}{6}\iiint_{D_{\epsilon}}c\bigl(\gamma(s),\gamma(t),\gamma(u)\bigr)^{2}\,\dd s\,\dd t\,\dd u.
	\end{align}
	Consequently,
	\begin{align}\label{AepsilonLimit}
		\lim_{\epsilon\downarrow0}A_{\epsilon} = \frac{1}{6}\M_{2}(\gamma),
	\end{align}
	where the value $+\infty$ is allowed a priori.
\end{lemma}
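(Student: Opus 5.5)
The identity \eqref{AepsilonMenger} follows by symmetrising the integrand of $A_\epsilon$ over the three parameters, using that $D_\epsilon$ is invariant under every permutation of $(s,t,u)$. The first step is to check that the integral defining $A_\epsilon$ converges absolutely. On $D_\epsilon$ all three parameter distances exceed $\epsilon$. Since $\gamma$ is embedded and continuous, and $\{(s,t):d_L(s,t)\geq\epsilon\}$ is compact, there is $\eta>0$ with $\abs{\gamma(s)-\gamma(t)}\geq\eta$ whenever $d_L(s,t)\geq\epsilon$. Hence $\abs{K(s,t)\cdot K(s,u)}\leq\eta^{-2}$ on $D_\epsilon$, and the integrand is bounded on a set of finite measure. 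This justifies all changes of variables and the rearrangement of finite sums of integrals used below.

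Next I would write $I(s,t,u)\vcentcolon=K(s,t)\cdot K(s,u)$. The region $D_\epsilon$ is invariant under the permutations $(s,t,u)\mapsto(t,s,u)$ and $(s,t,u)\mapsto(u,t,s)$, and these substitutions preserve Lebesgue measure on $\sphere_L^3$. So
\begin{align*}
	A_\epsilon=\iiint_{D_\epsilon}I(s,t,u)=\iiint_{D_\epsilon}I(t,s,u)=\iiint_{D_\epsilon}I(u,t,s).
\end{align*}
Averaging the three expressions gives
\begin{align*}
	A_\epsilon=\frac13\iiint_{D_\epsilon}\bigl(K(s,t)\cdot K(s,u)+K(t,s)\cdot K(t,u)+K(u,s)\cdot K(u,t)\bigr).
\end{align*}
On $D_\epsilon$ the points $\gamma(s),\gamma(t),\gamma(u)$ are pairwise distinct by embeddedness. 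Lemma~\ref{symmetrisationLemma} therefore applies pointwise, and the bracket equals $\frac12 c(\gamma(s),\gamma(t),\gamma(u))^2$. This yields the factor $\frac13\cdot\frac12=\frac16$ and proves \eqref{AepsilonMenger}.

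For \eqref{AepsilonLimit}, the sets $D_\epsilon$ increase as $\epsilon\downarrow0$. Their union is the set of triples with pairwise distinct parameters, whose complement, the union of the three diagonals, is a null set in $\sphere_L^3$. The integrand $c^2$ is nonnegative, so monotone convergence along any sequence $\epsilon_k\downarrow0$ gives convergence to $\frac16\M_2(\gamma)$, with $+\infty$ allowed. Since $\epsilon\mapsto\iiint_{D_\epsilon}c^2$ is monotone, the limit along sequences is the full limit.

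There is no serious obstacle in this lemma. The only point needing care is the first step: absolute integrability on $D_\epsilon$, which makes the symmetrisation legitimate. That bound comes from the uniform lower bound on chord lengths off the $\epsilon$-diagonal, and this is exactly where embeddedness enters.
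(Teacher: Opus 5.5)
Your proof is correct and follows the paper's argument. Permutation invariance of $D_\epsilon$ makes the three summands of the symmetrisation identity integrate to the same value $A_\epsilon$, Lemma~\ref{symmetrisationLemma} then gives the factor $\frac16$, and monotone convergence over the increasing sets $D_\epsilon$ gives the limit; your explicit boundedness check on $D_\epsilon$ via the chord lower bound from embeddedness, and your passage from sequences to the full limit $\epsilon\downarrow 0$, are details the paper leaves implicit.
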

\begin{proof}
	The set $D_{\epsilon}$ is invariant under every permutation of $s, t, u$. Therefore the integrals over $D_{\epsilon}$ of the three summands on the right-hand side of \eqref{symmetrisationIntro} coincide and all equal $A_{\epsilon}$. Integrating \eqref{symmetrisationIntro} gives \eqref{AepsilonMenger}. As $\epsilon\downarrow0$, the sets $D_{\epsilon}$ increase to $\sphere_{L}^{3}$ up to the three parameter diagonals, which have measure zero. Since $c^{2}\geq0$, monotone convergence yields \eqref{AepsilonLimit}.
\end{proof}

A closely related computation appears in \cite[Lemma~8]{TolsaVerdera2006}, where Tolsa and Verdera obtain the same universal constant $\frac{\pi^2}{3}$ for Euclidean truncations of the planar Cauchy kernel.

\begin{lemma}[Universal diagonal contribution]\label{diagonalLemma}
	Let $\gamma\in C^{1}(\sphere_{L}, \R^{d})$ be parametrised by arc length and embedded. Then
	\begin{align}\label{BepsilonLimit}
		\lim_{\epsilon\downarrow0}B_{\epsilon} = \frac{\pi^{2}}3L.
	\end{align}
\end{lemma}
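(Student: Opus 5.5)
The plan is to compute $B_\epsilon$ in local coordinates around $s$, replace the kernel by its leading term $T(s)/h$ from Lemma~\ref{kernelExpansionLemma}, and evaluate the resulting scale-invariant double integral exactly. For fixed $s$, write $t=s+h$ and $u=s+h+k$ with $h,k\in(-\frac L2,\frac L2]$ (modulo $L$). The integration region in \eqref{BepsilonDefinition} then becomes
\begin{align*}
	R_\epsilon\vcentcolon=\{\abs{k}\leq\epsilon,\ \abs{h}>\epsilon,\ d_L(0,h+k)>\epsilon\},
\end{align*}
and the integrand becomes $K(s,s+h)\cdot K(s,s+h+k)$.

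Fix $\delta=\delta_0/2$, with $\delta_0$ from Lemma~\ref{kernelExpansionLemma}. I split $R_\epsilon$ into a far part $\abs{h}\geq\delta$ and a near part $\abs{h}<\delta$.

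\emph{Far part.} Here both chords stay at parameter distance at least $\delta/2$ once $\epsilon<\delta/2$. By embeddedness and continuity, $\abs{K}$ is bounded there by a constant $C_\delta$. The $k$-range has length at most $2\epsilon$, so this part contributes $O(L^2\epsilon C_\delta^2)\to0$.

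\emph{Near part.} Here $\abs{h},\abs{h+k}<\delta_0$, so \eqref{kernelExpansion} applies to both factors and gives
\begin{align*}
	K(s,s+h)\cdot K(s,s+h+k)=\frac{1}{h(h+k)}+O\Bigl(\frac{\omega(\abs{h})+\omega(\abs{h+k})}{\abs{h}\abs{h+k}}\Bigr),
\end{align*}
using $\abs{T(s)}=1$ and $\omega\leq\frac12$.

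\emph{Error term.} On the subregion $\abs{h}\geq2\epsilon$ we have $\abs{h+k}\geq\abs{h}/2$, so the error is bounded by
\begin{align*}
	C\epsilon\int_{2\epsilon}^{\delta}\frac{\omega(2h)}{h^{2}}\,\dd h .
\end{align*}
Splitting at a fixed $\eta$ bounds this by $C\omega(2\eta)+C\epsilon\,\omega(2\delta)/\eta$. Letting first $\epsilon\to0$ and then $\eta\to0$ shows it vanishes. On $\epsilon<\abs{h}<2\epsilon$ the region has area $O(\epsilon^2)$ and the integrand is $O(\omega(3\epsilon)/\epsilon^2)$, so this piece also tends to $0$. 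Integrating in $s$ only multiplies these bounds by $L$.

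\emph{Leading term.} I rescale $h=\epsilon x$, $k=\epsilon y$. The leading term becomes
\begin{align*}
	I(\delta/\epsilon)=\int_{-1}^{1}\int_{\{1<\abs{x}<\delta/\epsilon,\ \abs{x+y}>1\}}\frac{\dd x\,\dd y}{x(x+y)}.
\end{align*}
The integrand decays like $x^{-2}$, so $I(\delta/\epsilon)\to I(\infty)$ as $\epsilon\to0$. To compute $I(\infty)$, take first $y\in(0,1]$. The pieces $x>1$ and $x<-1-y$ each contribute $\frac1y\ln(1+y)$. The configuration $x<-1$ with $x+y>1$ is impossible, so no negative contributions appear. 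The case $y<0$ gives the same by symmetry. Hence
\begin{align*}
	I(\infty)=4\int_0^1\frac{\ln(1+y)}{y}\,\dd y=4\cdot\frac{\pi^2}{12}=\frac{\pi^2}{3}.
\end{align*}
This value is independent of $s$, and integrating over $s$ gives \eqref{BepsilonLimit}.

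\emph{Expected obstacle.} The main difficulty is controlling the error term uniformly with only a modulus of continuity. The kernel $1/(\abs{h}\abs{h+k})$ is not integrable against $\dd h$ uniformly as $\epsilon\to0$ without using the thinness $\abs{k}\leq\epsilon$ of the strip. The $\eta$-splitting above is what makes the estimate close. A further bookkeeping point is to handle carefully, via $d_L$, the wrap-around of $h+k$ across $\pm L/2$; this only occurs in the far part, where boundedness suffices.
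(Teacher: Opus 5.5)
Your proof is correct and takes essentially the same route as the paper. The far part is $O(\epsilon)$ by embeddedness, the near part uses the expansion of Lemma~\ref{kernelExpansionLemma} with the error controlled by the thinness $\abs{k}\leq\epsilon$ of the strip, and rescaling by $\epsilon$ reduces the main term to a universal integral equal to $\frac{\pi^2}{3}$. The differences are cosmetic: you measure $u$ from $t$ instead of from $s$, use an $\eta$-splitting in place of dominated convergence, and integrate in $x$ first to obtain $\frac{\ln(1+y)}{y}$ directly.
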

\begin{proof}
	Fix $0<\delta<\delta_{0}$ so small that $2\delta < \frac{L}{2}$, and assume $0<\epsilon<\frac{\delta}{2}$. Decompose
	\begin{align*}
		B_{\epsilon}= B_{\epsilon,\delta}^{\mathrm{loc}} + B_{\epsilon,\delta}^{\mathrm{far}},
	\end{align*}
	where the local term is restricted by
	\begin{align*}
		d_{L}(s, t) < \delta,\qquad d_{L}(s, u)<\delta
	\end{align*}
	and $B_{\epsilon,\delta}^{\mathrm{far}}$ contains the complement within the domain of \eqref{BepsilonDefinition}.
	Throughout the proof, $C>0$ denotes a generic constant that may change from one occurrence to the next but is independent of $\epsilon$, $\delta$, and the integration variables.\\

	\emph{Step 1: The nonlocal contribution vanishes.}
		If a triple belongs to the far part, then
		\begin{align*}
			\delta \leq \max\{d_{L}(s, t), d_{L}(s, u)\}.
		\end{align*}
		Since $d_{L}(t, u) \leq \epsilon < \frac{\delta}{2}$, the triangle inequality implies
		\begin{align*}
			\frac{\delta}{2}\leq d_{L}(s, t),\qquad
			\frac{\delta}{2}\leq d_{L}(s, u).
		\end{align*}
		As $\gamma$ is continuous and injective, compactness gives
		\begin{align*}
			0 < m_{\delta} \vcentcolon= \min_{\{(s, t) : \frac{\delta}{2} \leq d_{L}(s, t)\}} \abs{\gamma(s)-\gamma(t)}.
		\end{align*}
		Hence
		\begin{align*}
			\abs{K(s,t)\cdot K(s,u)}\leq m_{\delta}^{-2}
		\end{align*}
		on the far part. For fixed $t$, the set $\{u:d_{L}(t, u)\leq\epsilon\}$ has measure $2\epsilon$. Consequently,
		\begin{align}\label{farPartEstimate}
			\abs{B_{\epsilon,\delta}^{\mathrm{far}}}
			\leq\frac{2L^{2}}{m_{\delta}^{2}}\epsilon
			\xrightarrow[\epsilon\downarrow 0]{}0.
		\end{align}

	\emph{Step 2: Local coordinates.}
		For the local part there are unique $h,k\in(-\delta,\delta)$ such that
		\begin{align*}
			t=s+h,\qquad u=s+k\qquad(\bmod L).
		\end{align*}
		Since $\abs{h-k}<2\delta<\frac{L}{2}$, one has $d_{L}(t,u)=\abs{h-k}$. Thus 
		\begin{align}\label{localBRepresentation}
			B_{\epsilon,\delta}^{\mathrm{loc}}
			= \int_{0}^{L} \iint_{\Omega_{\epsilon,\delta}} K(s, s+h)\cdot K(s, s+k)\,\dd h\,\dd k\,\dd s,
		\end{align}
		where
		\begin{align*}
			\Omega_{\epsilon,\delta} \vcentcolon= \{(h,k):\epsilon<\abs{h}<\delta,\ \epsilon<\abs{k}<\delta,\ \abs{h-k}\leq\epsilon\}.
		\end{align*}
		By Lemma \ref{kernelExpansionLemma}, 
		\begin{align}\label{localProductExpansion}
			K(s, s+h)\cdot K(s, s+k)
			= \frac1{hk}+R_{s}(h, k).
		\end{align}
		Since $\gamma$ is parametrised by arc length, we have $\omega\leq 2$ and
		\begin{align}\label{RemainderProductEstimate}
			\abs{R_{s}(h,k)}
			\leq
			C\frac{\omega(\abs{k})}{\abs{h}\abs{k}}
			+C\frac{\omega(\abs{h})}{\abs{h}\abs{k}}.
		\end{align}

	\emph{Step 3: The error in \eqref{localProductExpansion} vanishes.}
		On $\Omega_{\epsilon,\delta}$ the variables $h$ and $k$ necessarily have the same sign. Indeed, opposite signs would imply $\abs{h-k}=\abs{h}+\abs{k}>2\epsilon$. It therefore suffices to estimate the positive quadrant and multiply by two. We treat the first mixed term in \eqref{RemainderProductEstimate}; the second one is symmetric. On the range $\epsilon<h<2\epsilon$ one has $\epsilon<k<3\epsilon$, hence
		\begin{align*}
			\iint_{\substack{\epsilon<h<2\epsilon,\ \epsilon<k<\delta\\\abs{h-k}\leq\epsilon}}
			\frac{\omega(k)}{hk}\,\dd h\,\dd k
			\leq C\,\omega(3\epsilon)
			\xrightarrow[\epsilon\downarrow0]{}0.
		\end{align*}
		If $2\epsilon\leq h<\delta$, then $\abs{h-k}\leq\epsilon$ implies $\frac{h}{2}\leq k\leq \frac{3h}{2}$, and the admissible $k$-interval has length at most $2\epsilon$. Consequently,
		\begin{align*}
			\iint_{\substack{2\epsilon\leq h<\delta,\ \epsilon<k<\delta\\\abs{h-k}\leq\epsilon}}
			\frac{\omega(k)}{hk}\,\dd h\,\dd k
			\leq C\epsilon\int_{2\epsilon}^{\delta}\frac{\omega(\frac{3h}{2})}{h^{2}}\,\dd h.
		\end{align*}		
		For $2\epsilon \leq h < \delta$, the substitution $h = \epsilon x$ yields
		\begin{align*}
			\epsilon\int_{2\epsilon}^{\delta} \frac{\omega(\frac{3h}{2})}{h^2}\,\dd h
			= \int_2^{\frac{\delta}{\epsilon}} \frac{\omega(\frac{3\epsilon x}{2})}{x^2}\,\dd x
			\xrightarrow[\epsilon\downarrow 0]{} 0
		\end{align*}
		by dominated convergence, since $\omega(r)\to 0$ as $r\downarrow 0$.
		The preceding estimates are uniform in $s$. Consequently,
		\begin{align}\label{errorVanishes}
			\int_{0}^{L}\iint_{\Omega_{\epsilon,\delta}}
			R_{s}(h,k)\,\dd h\,\dd k\,\dd s
			\xrightarrow[\epsilon\downarrow0]{}0.
		\end{align}

	\emph{Step 4: Computation of the universal main term.}
		It remains to evaluate
		\begin{align*}
			J_{\epsilon,\delta}
			\vcentcolon=
			\iint_{\Omega_{\epsilon,\delta}}\frac{\dd h\,\dd k}{hk}.
		\end{align*}
		The positive and negative quadrants give identical contributions. On the positive quadrant, the scaling $h=\epsilon x$, $k=\epsilon y$ yields
		\begin{align*}
			J_{\epsilon,\delta}^{+}
			=
			\iint_{\substack{1<x<\frac{\delta}{\epsilon}\\1<y<\frac{\delta}{\epsilon}\\\abs{x-y}\leq1}}
			\frac{\dd x\,\dd y}{xy}.
		\end{align*}
		By monotone convergence,
		\begin{align*}
			\MoveEqLeft
			\lim_{\epsilon\downarrow0}J_{\epsilon,\delta}^{+}
			= \iint_{\substack{1< x\\1 < y\\\abs{x-y}\leq1}}
			\frac{\dd x\,\dd y}{xy}
			= 2\iint_{\substack{1< x < y\\\abs{x-y}\leq1}} \frac{\dd x\,\dd y}{xy}\\
			& = 2\iint_{1< x < y \leq x+1 } \frac{\dd x\,\dd y}{xy}
			= 2\int_{1}^{\infty}\frac{1}{x}\int_{x}^{x+1}\frac{\dd y}{y}\,\dd x\\
			& = 2 \int_{1}^{\infty}\frac{\log(1+\frac{1}{x})}{x}\,\dd x
			= 2\int_{0}^{1}\frac{\log(1+v)}{v}\,\dd v\\
			& = 2\sum_{m=1}^\infty\frac{(-1)^{m+1}}{m}\int_{0}^{1}v^{m-1}\,\dd v
			= 2\sum_{m=1}^{\infty}\frac{(-1)^{m+1}}{m^{2}}
			=\frac{\pi^{2}}6.
		\end{align*}
		Here we used the Taylor series of the logarithm, termwise integration justified by absolute	integrability, and Euler's solution to the Basel problem.
		Thus
		\begin{align}\label{Jlimit}
			\lim_{\epsilon\downarrow0}J_{\epsilon,\delta}=\frac{\pi^{2}}3.
		\end{align}
		Combining \eqref{localBRepresentation}, \eqref{localProductExpansion}, \eqref{errorVanishes}, and \eqref{Jlimit} gives
		\begin{align*}
			B_{\epsilon,\delta}^{\mathrm{loc}}\xrightarrow[\epsilon\downarrow 0]{}\frac{\pi^{2}}3L.
		\end{align*}
		Together with \eqref{farPartEstimate}, this proves \eqref{BepsilonLimit}.
\end{proof}

\section{The sharp quadratic inequality}\label{sectionQuadratic}

We now combine a truncated pairing identity with the periodic Wirtinger inequality to establish the quadratic lower bound 
and characterise the equality case.

Since the kernel $K$, the truncated field $F_\epsilon$, and the integral Menger curvature are invariant under translations of $\gamma$, we may replace $\gamma$ by $\gamma-\bar\gamma$, where
\begin{align*}
	\bar\gamma \vcentcolon= \frac{1}{L}\int_0^L\gamma(s)\,\dd s.
\end{align*}
Thus, without loss of generality, we may assume
\begin{align}\label{zeroMean}
	\int_0^L\gamma(s)\,\dd s=0.
\end{align}

\begin{lemma}[Truncated pairing identity]\label{pairingLemma}
	Let $\gamma:\sphere_L\to\R^d$ be an embedded, arc length parametrised continuous curve.
	For every $0 < \epsilon < \frac{L}{2}$,
	\begin{align*}
		\int_{0}^{L}F_{\epsilon}(s)\cdot\gamma(s)\,\dd s
		= -\frac{L(L-2\epsilon)}2.
	\end{align*}
\end{lemma}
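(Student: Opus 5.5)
The plan is to write the pairing as a double integral over the symmetric truncated region and then exploit the antisymmetry of $K$. By Fubini, which is justified because $K$ is bounded on $\{\epsilon\le d_L(s,t)\}$ (this is where embeddedness enters), we have
\begin{align*}
	\int_0^L F_\epsilon(s)\cdot\gamma(s)\,\dd s
	= \iint_{\{\epsilon<d_L(s,t)\}} K(s,t)\cdot\gamma(s)\,\dd t\,\dd s.
\end{align*}
The domain $\{(s,t):\epsilon<d_L(s,t)\}$ is invariant under the swap $(s,t)\mapsto(t,s)$. Moreover, $K(t,s)=-K(s,t)$. Relabelling the variables therefore gives the same integral with integrand $-K(s,t)\cdot\gamma(t)$. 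Averaging the two expressions yields
\begin{align*}
	\int_0^L F_\epsilon(s)\cdot\gamma(s)\,\dd s
	= -\frac12\iint_{\{\epsilon<d_L(s,t)\}} K(s,t)\cdot\bigl(\gamma(t)-\gamma(s)\bigr)\,\dd t\,\dd s .
\end{align*}

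The key observation is that $K(s,t)\cdot(\gamma(t)-\gamma(s)) = \abs{\gamma(t)-\gamma(s)}^2/\abs{\gamma(t)-\gamma(s)}^2 = 1$ for all distinct points. Since $\gamma$ is injective, $\gamma(s)\neq\gamma(t)$ whenever $s\ne t$, so the integrand is identically $1$ on the domain. The pairing therefore equals $-\tfrac12$ times the measure of $\{(s,t)\in\sphere_L^2:\epsilon<d_L(s,t)\}$. For each fixed $s$, the admissible $t$ form a set of measure $L-2\epsilon$, because $\epsilon<L/2$. The total measure is thus $L(L-2\epsilon)$, which gives the claimed value $-\tfrac{L(L-2\epsilon)}{2}$.

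The only point requiring care is the integrability needed for Fubini and for the change of variables. On the truncated region $\abs{K}\le m^{-1}$, where $m$ is the positive minimum of $\abs{\gamma(s)-\gamma(t)}$ over the compact set $\{d_L(s,t)\ge\epsilon\}$; positivity of $m$ follows from continuity and injectivity, as in Step 1 of Lemma~\ref{diagonalLemma}. The function $\gamma$ is bounded, so the integrand is absolutely integrable. I expect no real obstacle. Note also that the zero-mean normalisation \eqref{zeroMean} is not needed here, since the identity is translation invariant.
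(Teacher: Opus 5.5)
Your proposal is correct and follows essentially the same route as the paper: antisymmetry of $K$ under the swap $(s,t)\mapsto(t,s)$, the identity $K(s,t)\cdot\bigl(\gamma(t)-\gamma(s)\bigr)=1$, and the measure $L(L-2\epsilon)$ of the truncated region. Your explicit Fubini justification via the positive minimum chord length on $\{d_L(s,t)\geq\epsilon\}$ makes precise what the paper only notes at the start of Section~\ref{sectionTruncations}.
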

\begin{proof}
	Antisymmetry $K(t, s)=-K(s, t)$ gives
	\begin{align*}
		\MoveEqLeft
		2\int_{0}^{L}F_{\epsilon}(s)\cdot\gamma(s)\,\dd s
		= \int_0^L\int_{\epsilon < d_{L}(s, t)} K(s, t)\cdot\big(\gamma(s)-\gamma(t)\big)\,\dd s\,\dd t\\
		& =	-\int_0^L\int_{\epsilon < d_{L}(s, t)}1\,\dd s\,\dd t
		= -L(L-2\epsilon),
	\end{align*}
	because
	\begin{align*}
		K(s, t)\cdot\bigl(\gamma(s)-\gamma(t)\bigr)=-1.
	\end{align*}
\end{proof}

\begin{proposition}[Lower bound for the truncated field]
	\label{FEstimateProposition}
	Let $\gamma\in C^{1}(\sphere_{L}, \R^{d})$ be parametrised by arc length and embedded. Then
	\begin{align}\label{FEstimate}
		\pi^{2}\frac{(L-2\epsilon)^{2}}{L}\leq N_{\epsilon}.
	\end{align}
	In particular,
	\begin{align}\label{liminfNEstimate}
		\pi^{2}L \leq \liminf_{\epsilon\downarrow0}N_{\epsilon}.
	\end{align}
\end{proposition}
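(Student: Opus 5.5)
We combine the truncated pairing identity of Lemma~\ref{pairingLemma} with the Cauchy--Schwarz inequality and the periodic Wirtinger inequality. By the translation invariance discussed above, we assume the zero-mean normalisation \eqref{zeroMean}. Lemma~\ref{pairingLemma} together with Cauchy--Schwarz in $L^2(\sphere_L;\R^d)$ gives
\begin{align*}
	\frac{L(L-2\epsilon)}{2}
	= \abs{\int_0^L F_\epsilon(s)\cdot\gamma(s)\,\dd s}
	\leq N_\epsilon^{1/2}\left(\int_0^L\abs{\gamma(s)}^2\,\dd s\right)^{1/2}.
\end{align*}

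Next we bound the $L^2$ norm of $\gamma$. Each component $\gamma_j$ is $C^1$, $L$-periodic and has zero mean. The periodic Wirtinger inequality therefore yields $\int_0^L\gamma_j^2\leq \frac{L^2}{4\pi^2}\int_0^L(\gamma_j')^2$; this is the Parseval estimate, since the lowest nonzero frequency is $2\pi/L$. Summing over $j$ and using $\abs{\gamma'}=1$ gives
\begin{align*}
	\int_0^L\abs{\gamma(s)}^2\,\dd s\leq \frac{L^2}{4\pi^2}\cdot L=\frac{L^3}{4\pi^2}.
\end{align*}

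Inserting this bound into the Cauchy--Schwarz estimate and squaring, we obtain
\begin{align*}
	\frac{L^2(L-2\epsilon)^2}{4}\leq N_\epsilon\,\frac{L^3}{4\pi^2},
\end{align*}
which is exactly \eqref{FEstimate}. For \eqref{liminfNEstimate} we take $\liminf_{\epsilon\downarrow0}$; the left-hand side of \eqref{FEstimate} converges to $\pi^2 L$.

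No step here is a genuine obstacle. The only points to check are that $F_\epsilon\in L^2$, which holds because $N_\epsilon$ is finite for embedded curves as noted earlier, and that Wirtinger applies componentwise in $\R^d$ with the constant $(L/2\pi)^2$. Both are routine.
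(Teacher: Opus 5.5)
Your proposal is correct and follows essentially the same route as the paper: normalise to zero mean, then apply the truncated pairing identity, Cauchy--Schwarz, and the componentwise periodic Wirtinger inequality with $\abs{\gamma'}=1$, and finally let $\epsilon\downarrow0$. The only cosmetic difference is that you apply Cauchy--Schwarz before squaring, whereas the paper squares first.
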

\begin{proof}
	Translate $\gamma$ so that \eqref{zeroMean} holds.	By Lemma \ref{pairingLemma} and the Cauchy--Schwarz inequality,
	\begin{align}\label{CauchySchwarzPairing}
		\frac{L^{2}(L-2\epsilon)^{2}}4
		= \abs{	\int_{0}^{L} F_{\epsilon}(s)\cdot\gamma(s) \,\dd s }^{2}
		\leq N_{\epsilon} \int_{0}^{L} \abs{\gamma(s)}^{2} \,\dd s.
	\end{align}
	The classical periodic Wirtinger inequality (see \cite[Theorem~258]{HardyLittlewoodPolya1952}, the version for arbitrary period $L$ follows from the classical $2\pi$-periodic inequality by rescaling) states that every $L$-periodic scalar $C^{1}$ function $f$ with zero mean satisfies
	\begin{align*}
		\int_{0}^{L}\abs{f(s)}^{2}\,\dd s
		\leq\frac{L^{2}}{4\pi^{2}} \int_{0}^{L} \abs{f'(s)}^{2} \,\dd s.
	\end{align*}
	Equality holds if and only if
	\begin{align*}
		f(s) = a\cos\left(\frac{2\pi s}{L}\right) +	b\sin\left(\frac{2\pi s}{L}\right)
	\end{align*}
	for some $a, b\in\R$. 
	Applying this inequality to each component of $\gamma$ and summing, we obtain,
	\begin{align}\label{WirtingerGamma}
		\int_{0}^{L} \abs{\gamma(s)}^{2} \,\dd s
		\leq \frac{L^{2}}{4\pi^{2}} \int_{0}^{L} \abs{\gamma'(s)}^{2} \,\dd s
		= \frac{L^{3}}{4\pi^{2}}.
	\end{align}
	Combining \eqref{CauchySchwarzPairing} and \eqref{WirtingerGamma} yields
	\begin{align*}
		\frac{L^{2}(L-2\epsilon)^{2}}4 \leq	N_{\epsilon} \frac{L^{3}}{4\pi^{2}},
	\end{align*}
	and hence
	\begin{align*}
		\pi^{2}\frac{(L-2\epsilon)^{2}}{L} \leq N_{\epsilon}.
	\end{align*}
	This proves \eqref{FEstimate}. Letting $\epsilon\downarrow 0$ gives \eqref{liminfNEstimate}.
\end{proof}

\begin{theorem}[Sharp quadratic Menger-curvature inequality]\label{quadraticTheorem}
	Let $\gamma\in C^{1}(\sphere_{L}, \R^{d})$ be parametrised by arc length and embedded. Then
	\begin{align}\label{quadraticInequality}
		4\pi^{2}L \leq \M_{2}(\gamma).
	\end{align}
	Equality holds if and only if $\gamma$ parametrises a round circle.
\end{theorem}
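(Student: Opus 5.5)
The inequality follows by combining the limit of $N_\epsilon$ with the lower bound of Proposition~\ref{FEstimateProposition}. By Lemma~\ref{symmetricPartLemma} and Lemma~\ref{diagonalLemma}, $N_\epsilon = A_\epsilon + B_\epsilon \to \frac16\M_2(\gamma)+\frac{\pi^2}{3}L$ as $\epsilon\downarrow0$, with the value $+\infty$ allowed. If $\M_2(\gamma)=\infty$ there is nothing to prove. Otherwise \eqref{liminfNEstimate} gives
\begin{align*}
	\pi^2 L \leq \frac16\M_2(\gamma)+\frac{\pi^2}{3}L,
\end{align*}
that is, $\frac{2\pi^2}{3}L\leq\frac16\M_2(\gamma)$, which is \eqref{quadraticInequality}.

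For the equality case, the circle direction is immediate from \eqref{circlevalue} with $p=2$. Conversely, assume $\M_2(\gamma)=4\pi^2L$ and translate $\gamma$ so that \eqref{zeroMean} holds. Then $\lim_{\epsilon\downarrow0} N_\epsilon=\pi^2L$. Write $W\vcentcolon=\int_0^L\abs{\gamma}^2\,\dd s$. From \eqref{CauchySchwarzPairing} we have $\frac{L^2(L-2\epsilon)^2}{4}\leq N_\epsilon W$, and letting $\epsilon\downarrow0$ gives $\frac{L^4}{4}\leq \pi^2 L\, W$, i.e.\ $W\geq \frac{L^3}{4\pi^2}$. Together with \eqref{WirtingerGamma}, this forces equality in Wirtinger's inequality for the summed components. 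Since each componentwise inequality holds separately, each must be an equality. Hence every component has the form $a_j\cos(2\pi s/L)+b_j\sin(2\pi s/L)$, so $\gamma(s)=a\cos(2\pi s/L)+b\sin(2\pi s/L)$ with $a,b\in\R^d$.

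It then remains to show that such a curve, parametrised by arc length, is a round circle. Writing $\theta=2\pi s/L$, we get $\gamma'(s)=\frac{2\pi}{L}(-a\sin\theta+b\cos\theta)$. The condition $\abs{\gamma'}\equiv1$ becomes
\begin{align*}
	\abs{a}^2\sin^2\theta+\abs{b}^2\cos^2\theta-2a\cdot b\sin\theta\cos\theta=\frac{L^2}{4\pi^2}
\end{align*}
for all $\theta$. Comparing the coefficients of $1$, $\cos 2\theta$ and $\sin 2\theta$ yields $\abs{a}=\abs{b}=\frac{L}{2\pi}$ and $a\cdot b=0$. So $\gamma$ parametrises a round circle of radius $\frac{L}{2\pi}$ in the plane spanned by $a$ and $b$.

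The only delicate point is the limiting argument in the equality case. The estimate \eqref{FEstimate} is not sharp for $\epsilon>0$, so we cannot conclude from any fixed $\epsilon$. Instead we pass to the limit in the chain of inequalities and extract equality in Wirtinger's inequality. This works because $W$ does not depend on $\epsilon$ and the limit of $N_\epsilon$ exists by \eqref{eq:limieNepsilon}, so there is no real obstacle.
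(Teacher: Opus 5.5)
Your proposal is correct and follows the paper's proof essentially step for step. You combine the limit \eqref{eq:limieNepsilon} with \eqref{liminfNEstimate}, let $\epsilon\downarrow0$ in \eqref{CauchySchwarzPairing} to force equality in Wirtinger's inequality, and then use $\abs{\gamma'}\equiv1$; you compare Fourier coefficients where the paper evaluates at $s=0,\frac{L}{4},\frac{L}{8}$, and you make explicit the componentwise-equality step that the paper leaves implicit.

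One small correction to your closing remark: \eqref{FEstimate} is in fact attained by the circle for every $\epsilon$, since there $F_\epsilon$ is a negative multiple of the centred $\gamma$ and $N_\epsilon=\pi^2(L-2\epsilon)^2/L$ exactly. So the real reason you must pass to the limit is that the hypothesis $\M_2(\gamma)=4\pi^2L$ only determines $\lim_{\epsilon\downarrow0}N_\epsilon$, not $N_\epsilon$ at any fixed $\epsilon$.
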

\begin{proof}
	If $\M_{2}(\gamma)=\infty$, there is nothing to prove.
	As a consequence of Lemmas \ref{symmetricPartLemma} and \ref{diagonalLemma} in section \ref{sectionTruncations} we have shown \eqref{eq:limieNepsilon}, i.e.
	\begin{align*}
		\lim_{\epsilon\downarrow0}N_{\epsilon}
		= \frac{1}{6}\M_{2}(\gamma)+\frac{\pi^{2}}3L.
	\end{align*}
	Combining this with \eqref{liminfNEstimate} yields
	\begin{align*}
		\pi^{2}L\leq \frac{1}{6}\M_{2}(\gamma)+\frac{\pi^{2}}3L,
	\end{align*}
	which is equivalent to \eqref{quadraticInequality}. A round circle attains equality by \eqref{circlevalue}.

	Conversely, suppose equality holds in \eqref{quadraticInequality}. Then
	\begin{align}\label{NequalityLimit}
		N_{\epsilon}\xrightarrow[\epsilon\downarrow 0]{}\pi^{2}L.
	\end{align}
	After translating $\gamma$ to have mean zero, \eqref{CauchySchwarzPairing} and \eqref{NequalityLimit} imply, upon letting $\epsilon\downarrow0$,
	\begin{align*}
		\frac{L^{3}}{4\pi^{2}}
		\leq \int_{0}^{L}\abs{\gamma(s)}^{2}\,\dd s.
	\end{align*}
	Together with \eqref{WirtingerGamma}, equality must hold in the periodic Wirtinger inequality. Hence there exist $a ,b\in\R^{d}$ such that
	\begin{align*}
		\gamma(s)=a\cos\left(\frac{2\pi s}{L}\right) + b\sin\left(\frac{2\pi s}{L}\right).
	\end{align*}
	Evaluating $\abs{\gamma'(s)} = 1$ at $s = 0$, $s = \frac{L}{4}$, and $s = \frac{L}{8}$, respectively, gives
	\begin{align*}
		\abs{b}^{2} = \frac{L^{2}}{4\pi^{2}},\qquad
		\abs{a}^{2} = \frac{L^{2}}{4\pi^{2}}\qquad\text{and}\qquad
		a\cdot b & =0.
	\end{align*}
	Thus $a$ and $b$ are orthogonal vectors of equal length and $\gamma$ parametrises a round circle.
\end{proof}

\section{Higher exponents and proof of the main theorem}\label{sectionHigherExponents}

The passage from $p=2$ to $p>2$ uses only the monotonicity of normalised $L^{p}$ norms.

\begin{proof}[Proof of Theorem \ref{mainTheorem}]
	The case $p=2$ is Theorem \ref{quadraticTheorem}. Let $p>2$. If $\M_{p}(\gamma) = \infty$, the assertion is trivial. Otherwise, since $\sphere_{L}^{3}$ has measure $L^{3}$, monotonicity of normalised $L^{p}$ norms gives
	\begin{align*}
		\left(\frac{\M_{2}(\gamma)}{L^{3}}\right)^{\frac{1}{2}}\leq 
		\left(\frac{\M_{p}(\gamma)}{L^{3}}\right)^{\frac{1}{p}}.
	\end{align*}
	Using \eqref{quadraticInequality}, we obtain
	\begin{align*}
		2\pi L^{\frac{3}{p}-1} = L^{\frac{3}{p}-\frac{3}{2}}\left(4\pi^{2}L\right)^{\frac{1}{2}}
		\leq \M_{p}(\gamma)^{\frac{1}{p}}.
	\end{align*}
	Raising to the $p$-th power yields \eqref{mainInequality}. A round circle attains equality by \eqref{circlevalue}. Conversely, equality in \eqref{mainInequality} forces equality in the chain
	\begin{align*}
		\frac{2\pi}{L} \leq \left(\frac{\M_{2}}{L^{3}}\right)^{\frac{1}{2}}
		\leq \left(\frac{\M_{p}}{L^{3}}\right)^{\frac{1}{p}},
	\end{align*}
	and in particular $\M_{2} = 4\pi^{2}L$. Theorem \ref{quadraticTheorem} then implies that $\gamma$ is a round circle.
\end{proof}

\begin{proof}[Proof of Corollary \ref{basicCorollary}]
	If $\M_p(\gamma) = \infty$, there is nothing to prove. Otherwise, the self-avoidance and regularity results of Strzelecki, Szuma{\'n}ska and von der Mosel \cite[Theorem~1.2]{StrzeleckiSzumanskaVdM2010}, \cite[Theorems~2.2 and~2.3]{StrzeleckiVdM2013} imply that $\gamma$ is embedded and belongs to $C^{1, 1-3/p}$.
	Here, the condition	$\HM^1(\gamma(\sphere_L)) = L$ excludes multiple coverings. Although the cited result is stated
	for curves in $\R^3$, their proofs are	dimension-independent and extend to $\R^d$ for $d \geq 2$.
	The inequality now follows directly from Theorem \ref{mainTheorem}.
	
	That $\M_p$ is unknot-detecting follows from the known minimisability
	and strongness of $\M_p$ for $p>3$;	see \cite[Theorem~3.2]{StrzeleckiVdM2013}.
	Indeed, only finitely many knot types occur
	below any fixed energy level, and the
	minimum in each knot class is attained.
	Since the round circle is the unique global
	minimiser, the infimum over nontrivial knot
	classes is strictly larger than its energy.
\end{proof}

\subsection*{Acknowledgement}
The authors gratefully acknowledge the extensive and
substantive mathematical assistance provided by OpenAI's
ChatGPT, which played a significant role in the development
and refinement of the central arguments of this paper.
In particular, ChatGPT contributed to exploring proof
strategies, developing and critically examining mathematical
arguments, working through technical details, and improving
the structure and exposition of the manuscript.
The authors take full responsibility for the mathematical
correctness and final content of the paper.

\setlength{\parindent}{0cm}

\bibliography{menger_circle}{}
\bibliographystyle{abbrv}

\end{document}